\newtheorem{thm}{Theorem}[section]
\newtheorem{definition}[thm]{Definition}
\newtheorem{proposition}[thm]{Proposition}
\newtheorem{remark}[thm]{Remark}
\newtheorem{conj}[thm]{Conjecture}
          {\theoremstyle{definition}
}
          {\theoremstyle{definition}

\newtheorem{example}[thm]{Example}
}
\newtheorem{ques}[thm]{Question}
 \numberwithin{equation}{section}
\pgfplotsset{compat=1.18} 
\begin{document}

\title{Chern Numbers of Matroids}
\author[]{Eline Mannino}

\maketitle
\begin{abstract}
We define \textit{Chern numbers} of a matroid. These numbers are obtained when intersecting appropriate matroid Chern-Schwartz-MacPherson cycles defined by L\'opez de Medrano, Rinc\'on, and Shaw. We prove that when a matroid arises from a complex hyperplane arrangement the Chern numbers of the matroid correspond to the Chern numbers of the log cotangent bundle. 

A matroid $M$ of rank 3 has two Chern numbers. We prove that they are positive and that their ratio is bounded by 3, which is analogous to the \textit{Bogomolov-Miyaoka-Yau inequality}. If the matroid is orientable, we generalize a result of Eterovi\'c, Figuera, and Urz\'ua to prove that the ratio is bounded above by 5/2. Finally, we give a formula for the Chern numbers of the uniform matroid of any rank.
\end{abstract}

\section{Introduction}
\label{sec:intro}

We introduce new matroid invariants, namely their \textit{Chern numbers}. These are numbers obtained when intersecting the  \textit{Chern Schwartz MacPherson cycles} of a matroid (CSM cycles) introduced by 
L\'opez de Medrano, Rinc\'on, and Shaw  in \cite{de2020chern}.

More specifically, to a matroid $M$ of rank $d+1$ Ardila and Klivans assign a rational polyhedral fan $\Sigma_M$ called the \textit{Bergman fan} of $M$ (\cref{def: Bergman}) having a cone for each chain of flats of $M$. Denote the \textit{Minkowski weights} supported on $\Sigma_M$ by $MW_{\ast}(\Sigma_M)$, where the grading is by dimension. Then $MW_{\ast}(\Sigma_M)$ has the structure of a graded ring where the product 
$$MW_{k}(\Sigma_M) \times MW_{l}(\Sigma_M) \to MW_{d- k - l}(\Sigma_M)$$ 
can be defined in several ways, see for example \cite{shaw2013tropical}, \cite{franccois2013diagonal}, and \cite{adiprasito2018hodge}. Moreover, there exists an isomorphism $w: MW_0(\Sigma_M)\rightarrow \mathbb{Z}$ naturally defined by the weight assigned to the vertex of the fan.

The CSM cycles of a matroid are collections of Minkowski weights $\text{csm}_k(M)$ in $MW_k(M)$ 
for all $k = 0, \dots, d$. The weights 
are determined by the beta invariants of matroid minors. Our  definition in this text is the following. 

\begin{restatable}[]{definition}{cnumbers}
\label{def: cnumbers}
Let $M$ be a rank $d+1$ matroid. The \textit{Chern numbers of a matroid} $\bar{c}_1^{k_1}\bar{c}_2^{k_2}\cdots \bar{c}_d^{k_d}(M)$ are 
$$\bar{c}_1^{k_1}\bar{c}_2^{k_2}\cdots \bar{c}_d^{k_d}(M)= \text{w}(\textnormal{csm}_{d-1}^{k_1}(M)\textnormal{csm}_{d-2}^{k_2}(M)\cdots \textnormal{csm}_0^{k_d}(M)),$$  where $\sum_{i=1}^d i k_i=d$, and the map $\text{w}$ is the map returning the multiplicity associated to the vertex.
\end{restatable}

In algebraic geometry, the \textit{Chern-Schwartz-MacPherson classes} generalize the Chern classes of the tangent bundle for varieties that are not smooth and complete see \cite{macpherson1974chern} and \cite{AST_1981__82-83__93_0}. 
In \cite{de2020chern}, the authors prove that the CSM cycles of a matroid representable over the complex numbers are related to CSM classes in algebraic geometry. Namely, when the matroid $M_\mathcal{A}$ arises from an arrangement $\mathcal{A}$ defined over $\mathbb{C}$, the authors prove that 
$$\text{CSM}(1_{C(\mathcal{A})})=\sum_{k=0}^d \text{csm}_k(M_\mathcal{A})\in MW_*(\Sigma_{\mathcal{A}}),$$
where $\text{CSM}(1_{C(\mathcal{A})})$ is the CSM class of the group of constructible function on the complement of the arrangement $C(\mathcal{A})$. Moreover, Aluffi proves that $\text{CSM}(1_{C(\mathcal{A})})$ equals the Chern class of the sheaf of the log cotangent bundle, see \cite{aluffi2012chern}. This allows us to to relate the Chern numbers of matroids representable over $\mathbb{C}$ to Chern numbers in algebraic geometry. 
\begin{restatable}{thm}{main}
\label{main}
Let $\mathcal{A}\subset \mathbb{P}^d_\mathbb{C}$ be a hyperplane arrangement and let $M_\mathcal{A}$ be the corresponding matroid. Let $W_\mathcal{A}$ be the maximal wonderful compactification of its complement $C(\mathcal{A})$ and let $D= W_\mathcal{A} \setminus C(\mathcal{A})$. Let $\Omega^1_{W_\mathcal{A}}(\log D)$ denote the bundle of log differentials on $W_\mathcal{A}$ with poles along $D$ and $\Omega^1_{W_\mathcal{A}}(\log D)^*$ denote its dual. Then 
$$c_1(\Omega^1_{W_\mathcal{A}}(\log D)^*)^{k_1}\cdots c_d(\Omega^1_{W_\mathcal{A}}(\log D)^*)^{k_d}=\bar{c}_1^{k_1}\cdots \bar{c}_d^{k_d}(M_\mathcal{A}).$$
\end{restatable}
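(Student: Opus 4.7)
The plan is to reduce the statement to the combinatorial identity at the level of Minkowski weights on the Bergman fan, invoking in sequence two results already recalled in the introduction, and then transporting the intersection numbers through the Feichtner--Yuzvinsky isomorphism between the Chow ring of $W_{\mathcal A}$ and the ring of Minkowski weights on $\Sigma_{\mathcal A}$. Throughout I would keep careful track of the degree convention: since $M_{\mathcal A}$ has rank $d+1$ and $W_{\mathcal A}$ has complex dimension $d$, a class in $MW_k(\Sigma_{\mathcal A})$ corresponds to a class in $CH^{d-k}(W_{\mathcal A})$, so in particular $\mathrm{csm}_{d-i}(M_{\mathcal A})$ is the piece matching $c_i \in CH^i(W_{\mathcal A})$.

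First I would apply Aluffi's theorem from \cite{aluffi2012chern} to write the total Chern class of the log tangent bundle as the CSM class of the characteristic function of the complement,
\[
c\bigl(\Omega^1_{W_{\mathcal A}}(\log D)^{*}\bigr) \;=\; \mathrm{CSM}(1_{C(\mathcal A)}) \;\in\; CH^{*}(W_{\mathcal A}).
\]
Then, under the Feichtner--Yuzvinsky isomorphism $CH^{*}(W_{\mathcal A}) \cong MW_{*}(\Sigma_{\mathcal A})$, the theorem of L\'opez de Medrano, Rinc\'on and Shaw \cite{de2020chern} identifies the right-hand side with $\sum_{k=0}^{d}\mathrm{csm}_k(M_{\mathcal A})$. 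Taking the graded piece in codimension $i$, this already yields the equality $c_i\bigl(\Omega^1_{W_{\mathcal A}}(\log D)^{*}\bigr) = \mathrm{csm}_{d-i}(M_{\mathcal A})$ as elements identified across the two rings.

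Next I would promote the equality in each degree to an equality of top intersection numbers. Given an index vector $(k_1,\dots,k_d)$ with $\sum_i i k_i = d$, the product $c_1^{k_1}\cdots c_d^{k_d}$ lies in $CH^{d}(W_{\mathcal A}) \cong \ZZ$, and its image in $MW_0(\Sigma_{\mathcal A})$ is $\mathrm{csm}_{d-1}^{k_1}\cdots \mathrm{csm}_0^{k_d}(M_{\mathcal A})$. The key point I need is that the Feichtner--Yuzvinsky isomorphism is an isomorphism of graded rings, and that the degree map $\deg \colon CH^{d}(W_{\mathcal A}) \to \ZZ$ is intertwined with the weight map $w \colon MW_0(\Sigma_{\mathcal A}) \to \ZZ$. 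Both facts are standard in this tropical/Bergman setup (the degree of a zero-cycle on $W_{\mathcal A}$ corresponds to the multiplicity at the origin of the recession cycle), but I would cite or verify compatibility with the specific fan-theoretic product used in \cite{shaw2013tropical,franccois2013diagonal,adiprasito2018hodge}, since \cref{def: cnumbers} uses that product.

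The main obstacle is precisely this last compatibility: one must be certain that the multiplication of the CSM Minkowski weights on $\Sigma_{\mathcal A}$ matches the intersection product of the corresponding Chern classes pulled back to $W_{\mathcal A}$, and that the final pairing with the fundamental class equals the weight at the vertex. Once the ring and degree-map compatibilities are in place, combining Aluffi's formula, the identification of each $\mathrm{csm}_{d-i}$ with $c_i$, and the definition \cref{def: cnumbers} gives the claimed equality
\[
c_1(\Omega^1_{W_{\mathcal A}}(\log D)^{*})^{k_1}\cdots c_d(\Omega^1_{W_{\mathcal A}}(\log D)^{*})^{k_d} \;=\; \bar c_1^{k_1}\cdots \bar c_d^{k_d}(M_{\mathcal A})
\]
with no further calculation.
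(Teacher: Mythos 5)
Your proposal follows essentially the same route as the paper: Aluffi's identification of $c(\Omega^1_{W_\mathcal{A}}(\log D)^*)$ with $\mathrm{CSM}(1_{C(\mathcal{A})})$, the L\'opez de Medrano--Rinc\'on--Shaw theorem identifying that class with $\sum_k \mathrm{csm}_k(M_\mathcal{A})$, and the Feichtner--Yuzvinsky/Poincar\'e-duality identification of $A_*(W_\mathcal{A})$ with $MW_*(\Sigma_\mathcal{A})$ compatibly with the degree maps. The compatibility you flag as the ``main obstacle'' is exactly what the paper settles by citing Proposition 5.6 of \cite{adiprasito2018hodge} for the duality $MW_k(\Sigma_\mathcal{A})\simeq \mathrm{Hom}_\mathbb{Z}(A^k(W_\mathcal{A}),\mathbb{Z})$ together with the fact that capping with $[W_\mathcal{A}]$ is a ring isomorphism respecting degree maps.
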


Next, for a few particular matroids we find an explicit formula for the Chern numbers. For example, in \cref{prop: ch uniform}, we give a formula for computing the Chern numbers of the uniform matroid. 
In \cref{prop: Chern3} we give a formula for the Chern numbers of simple matroids of rank 3 in terms of the flats. Note that when a matroid is representable over some field $k$, it arises from a hyperplane arrangement over $k$. In fact the formula given in \cref{prop: Chern3} corresponds to Eterovi\'c, Figuera, and Urz\'ua definition for the Chern numbers of line arrangements given in terms of the incidence structure \cite{eterovic2022geography}. 

Inspired by the study of the geography of manifolds, see for example  \cite{hunt1989complex}, we then launch a study of the possible Chern numbers of matroids. 
Focusing specifically on rank $3$, we consider the following question. 

\begin{ques}[Geography of Chern numbers in rank 3]
\label{question: range}
For which pairs $(a,b)\in \mathbb{Z}_{\geq 0}\times \mathbb{Z}_{\geq 0}$ does there exist a rank $3$ matroid such that $$(\overline{c}_1^2(M), \overline{c}_2(M))=(a,b)?$$ 
\end{ques}

We then establish some first results 
in the geography problem for matroids. 

\begin{restatable}{proposition}{newbounds}
\label{prop: new bounds}
Let $M$ be a simple matroid of rank $3$ on a ground set $E$ of size $n$. The Chern numbers $\overline{c}_1^2(M)$ and $\overline{c}_2(M)$ are bounded by
\begin{align*}
     0\leq \overline{c}_1^2(M)&\leq \overline{c}_1^2(U_{3,n}),\ \textup{and} \\
    0\leq \overline{c}_2(M)&\leq\overline{c}_2(U_{3,n}).
\end{align*}
\end{restatable}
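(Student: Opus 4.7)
The plan is to use the explicit combinatorial formula from Proposition \ref{prop: Chern3}, which expresses both Chern numbers of a simple rank-$3$ matroid $M$ as polynomial functions of $n = |E|$ and the flat-count vector $(t_k)_{k \geq 2}$, where $t_k$ denotes the number of rank-$2$ flats of size $k$. These parameters satisfy the identity $\sum_{k \geq 2} \binom{k}{2} t_k = \binom{n}{2}$, since every pair of elements of a simple matroid spans a unique rank-$2$ flat. The uniform matroid $U_{3,n}$ is the unique simple rank-$3$ matroid on $n$ elements with $t_2 = \binom{n}{2}$ and $t_k = 0$ for $k \geq 3$, so both bounds are claims about a single extreme point of the admissible region of $(t_k)$.

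The upper bounds follow from a monotonicity principle: replacing in the formula a single rank-$2$ flat $F$ of size $k \geq 3$ by $\binom{k}{2}$ rank-$2$ flats of size $2$ weakly increases both $\overline{c}_1^2$ and $\overline{c}_2$. This ``resolution" step preserves the constraint $\sum_{k\geq 2}\binom{k}{2} t_k = \binom{n}{2}$, and iterating it over all flats of size $> 2$ transforms the flat-count data of any simple rank-$3$ $M$ into that of $U_{3,n}$. Establishing the step reduces to an explicit polynomial inequality in $k$.

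The lower bound $\overline{c}_2(M) \geq 0$ should come directly from the CSM definition: the weight of $\textnormal{csm}_0(M)$ at the vertex of $\Sigma_M$ is, up to sign, a beta-invariant type quantity of $M$, which is non-negative for every loopless matroid. For $\overline{c}_1^2(M) \geq 0$, I would use the closed-form expression together with the flat-count constraint to rewrite the quantity as a manifestly non-negative combinatorial sum. An alternative route is to verify non-negativity at each vertex of the polytope of admissible $(t_k)$ (i.e.\ the configurations in which the combinatorial weight is concentrated on a single $k$) and then invoke the monotonicity step above to extend it to arbitrary $M$.

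The main obstacle will be the monotonicity step for $\overline{c}_1^2$: this is a quadratic self-intersection, and its polynomial expression in $(n, t_k)$ mixes positive and negative contributions. The cleanest route is to isolate the contribution of a single flat $F$ of size $k$ to $\overline{c}_1^2$ via the CSM weight on the ray $v_F$ and the weights on the maximal cones adjacent to $v_F$, compare it with the contribution of $\binom{k}{2}$ separate size-$2$ flats in the resolved configuration, and verify that the difference has the correct sign for every $k \geq 3$. The sign bookkeeping in this local comparison, traced back through the beta invariants of the rank-$2$ minors $M/F$ and $M/\{e_i,e_j\}$, is the real content of the proof.
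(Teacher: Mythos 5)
Your treatment of the upper bounds is sound: the resolution step is exactly the identity $\overline{c}_1^2(M)=(3-n)^2-\sum_{m\ge 2}(m-2)^2t_m$ that the paper reads off directly (replacing a size-$k$ flat by $\binom{k}{2}$ size-$2$ flats adds $(k-2)^2$ to $\overline{c}_1^2$ and $\binom{k-1}{2}$ to $\overline{c}_2$, both nonnegative, and preserves $\sum_m\binom{m}{2}t_m=\binom{n}{2}$), so for $\overline{c}_1^2$ you recover the paper's one-line argument, while for $\overline{c}_2$ you get a self-contained rank-$3$ substitute for the paper's appeal to Brylawski's inequality $\beta(M)\le\beta(U_{3,n})$. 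The lower bound $\overline{c}_2(M)\ge 0$ via nonnegativity of the beta invariant also matches the paper.

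The genuine gap is the lower bound $\overline{c}_1^2(M)\ge 0$, and both routes you sketch for it fail. First, no rewriting of $9-5n+\sum_m(3m-4)t_m$ as a manifestly nonnegative sum using only the pair-counting identity $\sum_m\binom{m}{2}t_m=\binom{n}{2}$ can exist: the integer data $n=4$, $t_3=2$, $t_k=0$ otherwise satisfies that identity but gives $\overline{c}_1^2=-1$, so nonnegativity is not a formal consequence of the identity and genuinely requires further matroid axioms. Second, the ``check the vertices of the polytope and propagate by monotonicity'' idea points the wrong way: your resolution step \emph{increases} $\overline{c}_1^2$, so it can only transport a lower bound \emph{from} the unresolved (large-flat) configurations \emph{to} the resolved ones, and those unresolved vertices are exactly where the bound fails --- e.g.\ weight concentrated on $m=n-1$ gives $\overline{c}_1^2=(n-3)^2\bigl(1-\tfrac{n}{n-2}\bigr)<0$ for $n\ge 4$. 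The paper instead proves $\overline{c}_1^2(M)\ge 0$ in \cref{prop: ch positive} by induction on $n$: if $M$ has no coloop one picks an element $i$ lying on $t\ge 3$ rank-$2$ flats, computes $\overline{c}_1^2(M)-\overline{c}_1^2(M\setminus i)$ using the covering-axiom count $|\mathcal{F}_2|=n-\sum_{F\in\mathcal{F}_1}(|F|-1)$, and bounds the difference below by $2t-5>0$; the coloop case is computed directly and gives exactly $0$. You would need to supply an argument of this kind (or some other input from the flat axioms beyond pair counting) to close the proof.
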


We also prove upper and lower bounds on the quotient of the two Chern numbers of rank $3$ matroids. The next theorem is a generalization of Proposition 3.4 in \cite{eterovic2022geography} which proves the same bounds for the quotient of Chern numbers of line arrangements.

\begin{restatable}{thm}{boundsc}
\label{thm: bounds ch 3}
Let $M$ be a simple matroid of rank $3$ on a ground set $E$ of size n with no coloops. Then,
\begin{align*}
    \frac{2n-6}{n-2}\leq \frac{\overline{c}_1^2(M)}{\overline{c}_2(M)}\leq 3.
\end{align*}
Left equality holds if and only if M is the uniform matroid $U_{3,n}$, and right equality holds if and only if M is the matroid of a finite projective plane.
\end{restatable}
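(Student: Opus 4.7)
The plan is to apply the combinatorial formula of \cref{prop: Chern3}, which expresses $\bar{c}_1^2(M)$ and $\bar{c}_2(M)$ as explicit polynomials in $n$ and the numbers $t_k$ of rank-$2$ flats of $M$ of size $k$. Throughout I use the identity
\[
\sum_{k \geq 2} k(k-1)\, t_k \;=\; n(n-1),
\]
which comes from the fact that every pair of distinct elements of $E$ lies in a unique rank-$2$ flat of $M$.

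For the upper bound, substituting the formulas from \cref{prop: Chern3} and simplifying via the identity above should collapse the expression $3\bar{c}_2(M) - \bar{c}_1^2(M)$ to the form $\sum_{k \geq 2} t_k - n$. Thus $\bar{c}_1^2(M) \leq 3\bar{c}_2(M)$ is equivalent to the assertion that the total number of rank-$2$ flats of $M$ is at least $n$, which is precisely the de Bruijn--Erd\H{o}s theorem applied to $M$ viewed as a finite linear space. The equality case of de Bruijn--Erd\H{o}s is exhausted by near-pencils and finite projective planes. In a near-pencil the single off-line element lies in every basis and hence is a coloop, so the no-coloop hypothesis eliminates this case, leaving only finite projective planes, as desired.

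For the lower bound, the analogous substitution and simplification should produce
\[
(n-2)\bar{c}_1^2(M) - (2n-6)\bar{c}_2(M) \;=\; \sum_{k \geq 3} (k-2)(n-k-1)\, t_k.
\]
The factor $(k-2)$ is non-negative for $k \geq 3$, and the factor $(n-k-1)$ is non-negative for $k \leq n-1$. The no-coloop hypothesis rules out rank-$2$ flats of size $n-1$ (whose complementary element would be a coloop) and of size $n$ (which would force the rank of $M$ below $3$). Hence every summand is non-negative, with equality precisely when $t_k = 0$ for all $k \geq 3$, i.e., when every rank-$2$ flat has size exactly $2$. For a simple rank-$3$ matroid this characterizes $U_{3,n}$.

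The main obstacle is the upper bound: unlike the lower bound, which reduces to a term-by-term positivity statement, the upper bound hinges on de Bruijn--Erd\H{o}s, a classical but non-elementary combinatorial theorem. The matroid framework shows that this Bogomolov--Miyaoka--Yau-type inequality, which \cite{eterovic2022geography} established for complex line arrangements, is in fact a purely combinatorial phenomenon independent of representability.
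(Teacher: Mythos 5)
Your proposal is correct and follows essentially the same route as the paper: both bounds are reduced via \cref{prop: Chern3} and the pair-counting identity $\sum_{k}k(k-1)t_k=n(n-1)$ to, respectively, the termwise-nonnegative sum $\sum_{k}(k-2)(n-1-k)t_k$ and the de Bruijn--Erd\H{o}s/Dowling--Wilson inequality $\sum_k t_k\geq n$. The only (welcome) refinement is that you make explicit how the no-coloop hypothesis eliminates the near-pencil from the de Bruijn--Erd\H{o}s equality case, a point the paper leaves implicit.
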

Curiously, the right inequality is the same inequality for the quotient of the Chern numbers of complex algebraic surfaces given some restrictions, which is known as the \textit{Bogomolov-Miyaoka-Yau inequality}. Note that, also for the case of representable matroids, the inequality in \cref{thm: bounds ch 3} is a different inequality than the Bogomolov-Miyaoka-Yau inequality. While we are calculating Chern classes of bundles of log differentials (see \cref{main}), they are calculating Chern classes of the complex tangent bundles. Later on, Miyaoka stated an analogy of the Bogomolov-Miyaoka-Yau inequality in a more general context that can be restricted to our case for $X=W_\mathcal{A}$, see Corollary 1.2 in \cite{miyaoka1984maximal}. 

Line arrangements appear in Hirzebruch's study of the Bogomolov-Miyaoka-Yau inequality. To a line arrangement $\mathcal{A}\in \mathbb{P}^2_\mathbb{C}$ and a $n\geq 2$, Hirzebruch associates a smooth algebraic surface $Y$ by desingularizing a covering of $\mathbb{P}^2_\mathbb{C}$ of degree depending on $n$, see \cite{hirzebruch1983arrangements} for details. Hirzebruch proves that the right inequality holds for the quotient of the Chern numbers of $Y$, see the theorem in Section 2.5 of \cite{hirzebruch1983arrangements}. Moreover, Sommes proves that the left inequality holds for surfaces constructed in the same manner as Hirzebruch, see Theorem 5.1 in \cite{sommese1984density}. 

While Hirzebruch  gives, for a few specific line arrangements over $\mathbb{C}$, a surface covering the line arrangement having Chern numbers quotient obtaining 3, we prove that the upper bound of $3$ in Theorem \ref{thm: bounds ch 3} is obtained only for matroids arising from finite projective planes.

Before introducing the next theorem, we need some background. A simple closed curve $L$ in $\mathbb{P}^2_\mathbb{R}$ is called a pseudoline if $\mathbb{P}^2\setminus L$  has one connected component. Moreover, Folkman and Lawrence have proved that there is a one to one correspondence between arrangements of pseudolines and oriented matroids \cite{folkman1978oriented}. The next theorem about the quotient of the Chern numbers is a generalization of Theorem 3.5 in \cite{eterovic2022geography} which is stated in terms of real line arrangements. 
\begin{restatable}{thm}{orientable}
\label{thm: orientable}
   If $M$ is a simple oriented matroid of rank 3 with no coloops, then 
\begin{align*}
    \frac{\overline{c}_1^2(M)}{\overline{c}_2(M)}\leq \frac{5}{2}.
\end{align*}
 Equality is achieved if and only if the pseudoline arrangement of M is simplicial. 
\end{restatable}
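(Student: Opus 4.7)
The plan is to reduce the theorem to a purely combinatorial inequality on the flat numerics of $M$ and then invoke Melchior's inequality, which is the feature distinguishing oriented matroids of rank $3$ from arbitrary simple rank $3$ matroids. Concretely, I would first invoke \cref{prop: Chern3}, which expresses both $\overline{c}_1^2(M)$ and $\overline{c}_2(M)$ as explicit polynomials in the ground set size $n$ and the sequence $(t_k)_{k\geq 2}$, where $t_k$ denotes the number of rank-$2$ flats of size $k$. Forming the linear combination $2\overline{c}_1^2(M)-5\overline{c}_2(M)$, one obtains an expression of the form $\sum_{k\geq 2} a_k\, t_k + (\text{function of } n)$ for explicit integer coefficients $a_k$.

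Next I would apply Melchior's inequality, valid for any simple pseudoline arrangement in $\mathbb{P}^2_{\mathbb{R}}$ that is not a near-pencil,
\begin{equation*}
t_2 \;\geq\; 3 + \sum_{r\geq 4} (r-3)\, t_r,
\end{equation*}
which is a direct consequence of the Euler characteristic of $\mathbb{P}^2_{\mathbb{R}}$ together with the fact that every face of the cell decomposition has at least three sides. By Folkman--Lawrence, the hypothesis that $M$ is a simple oriented matroid of rank $3$ with no coloops gives such a pseudoline arrangement, and the no-coloops assumption rules out the pathological near-pencil case. Substituting Melchior's inequality into the combination computed in the previous step should yield $2\overline{c}_1^2(M)-5\overline{c}_2(M)\leq 0$, which is exactly the desired bound (after observing that $\overline{c}_2(M)>0$ by \cref{prop: new bounds} together with the fact that the arrangement is non-degenerate).

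For the equality case, I would trace the argument backwards: equality $\overline{c}_1^2(M)/\overline{c}_2(M)=5/2$ forces equality in Melchior's inequality, and equality in Melchior's inequality is classically equivalent to every face of the pseudoline arrangement being a triangle, i.e.\ to the arrangement being simplicial. Conversely, starting from a simplicial pseudoline arrangement, equality in Melchior's bound together with the identity from \cref{prop: Chern3} produces $\overline{c}_1^2/\overline{c}_2=5/2$.

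The main obstacle is the first step: correctly writing $2\overline{c}_1^2(M)-5\overline{c}_2(M)$ so that the coefficients $a_k$ align exactly with the $(r-3)$-weighting in Melchior's inequality, and keeping track of the contribution of $t_2$ and of the constant term $3$. This is essentially the rank-$3$ bookkeeping that Eterovi\'c--Figuera--Urz\'ua carry out in \cite{eterovic2022geography} for real line arrangements, and I expect the only new content to be the observation that all steps depend only on the underlying pseudoline incidence combinatorics, hence extend verbatim from representable oriented matroids to all oriented matroids via Folkman--Lawrence.
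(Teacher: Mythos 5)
Your proposal is correct and follows essentially the same route as the paper: both reduce the bound to the identity $5\overline{c}_2(M)-2\overline{c}_1^2(M)=-3-\sum_{m\geq 2}(m-3)t_m$ and then recognize the nonnegativity of the right-hand side as Melchior's inequality for the pseudoline arrangement supplied by Folkman--Lawrence, with equality exactly in the simplicial case. The only cosmetic difference is that the paper rederives Melchior's inequality from the Euler characteristic of $\mathbb{P}^2_{\mathbb{R}}$ (via the face counts $p_m$ and $p_2=0$) rather than citing it outright, and note that your aside about excluding near-pencils is unnecessary: Melchior's inequality only fails for pencils, which are already excluded by the rank-$3$ hypothesis.
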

\textcolor{red}{}

In the following question  we further relate the study of Chern numbers of matroids to geometry.
\begin{ques}[Representability of Chern numbers in rank 3]
\label{question: realizability 3}
Given any simple matroid $M$ of rank 3 with Chern numbers $(\overline{c}_1^2(M),\overline{c}_2(M))$, does there exist a representable matroid $M_\mathcal{A}$ over some field $k$ of rank $3$ such that $$(\overline{c}_1^2(M_\mathcal{A}), \overline{c}_2(M_\mathcal{A}))=(\overline{c}_1^2(M), \overline{c}_2(M))?$$ 
\end{ques}
Finally, we want to point out some possible implications of answers to this question. In \cref{thm: bounds ch 3} we will see that for a simple matroid of rank 3 the quotient $\overline{c}_1^2(M)/\overline{c}_2(M)=3$ if and only if $M$ is the matroid of a finite projective plane. Moreover, in \cref{ex: ch PG(2 q)} we show that the finite projective plane matroid $PG(2,q)$ of order $q$ has Chern numbers 
\begin{align*}
  \overline{c}_1^2(PG(2,q))=&3(q^3-q^2-q+1),\\
    \overline{c}_2(PG(2,q))=&q^3-q^2-q+1.
\end{align*}
Hence, finding out for which values of $q$ there exists a matroid $M$ having Chern numbers $(\overline{c}_1^2(M),\overline{c}_2(M))=(3(q^3-q^2-q+1),q^3-q^2-q+1)$, and if so if the matroid is representable over a field, would in fact solve the prime power conjecture for finite projective planes (\cite{veblen1906finite}). 

\section*{Acknowledgements}
{
This paper is based on my master thesis, see \cite{mannino2022chern}. Firstly, I want to thank my supervisor Kris Shaw who inspired and encouraged me. I also want to thank Felipe Rinc\'on and John Christian Ottem for carefully reading and commenting on my thesis. I want to thank Edvard Aksnes and Jon Pål Hamre for insightful discussions. I want to thank Ragni Piene for noticing a significant mistake in the bibliography. Finally, I want to thank Matt Wendell Larson for pointing me to the more general case of the the Bogomolov-Miyaoka-Yau inequality and Piotr Pokora for pointing me to previous results regarding pseudoline arrangements. 

I acknowledge the support of the Centre for Advanced Study (CAS) in Oslo, Norway, which funded and hosted the Young CAS research project Real Structures in Discrete, Algebraic, Symplectic, and Tropical Geometries during the 2021/2022 and 2022/2023 academic years. This research was also supported by the Trond Mohn Foundation project “Algebraic and Topological Cycles in Complex and Tropical Geometries".
}
\section{Chern numbers of matroids}
We begin by introducing the necessary background. 
\begin{definition}
\label{def: matroid}
A \textit{matroid} on a finite set $E$ of rank $d+1$ is a function  $$r:2^E\rightarrow \mathbb{Z}$$ 
satisfying
\begin{enumerate}
    \item[1.] $0 \leq r(S) \leq |S|$,
    \item[2.] $S\subseteq U$ implies $r(S)\leq r(U),$
    \item[3.] $r(S\cup U) + r (S\cap U)\leq r(S)+r(U)$\text{ and}
    \item[4.] $r(E)=d+1.$
\end{enumerate}
\end{definition}
\begin{example}
\label{ex: hypArr}
Let  $L\subseteq \mathbb{P}^n_k$ be a linear subspace over a field $k$. And  let
$$\mathcal{A}=\{H_i=L \cap V(x_i)\ |\ 0\leq i \leq n-1\}\subseteq L$$
be the hyperplane arrangement given by intersecting $L$ with the coordinate axis $V(x_i)\subseteq \mathbb{P}^n_k$. We can define a function $r$ on the power set of $\mathcal{A}$: given a subset $S\subseteq \mathcal{A}$ let  $$r(S)= \text{codimension}\Big(\bigcap_{i\in S}H_i\Big)$$
in $L$. The function $r$ is a matroid.
\qed
\end{example}

Given a hyperplane arrangement $\mathcal{A}$ we denote by $M_\mathcal{A}$ the corresponding matroid.

A subset $S \subseteq E$ such that for any element $j$ of $E$ not in $S$ the rank $$r(S\cup \{j\})>r(S)$$ is called a \textit{flat}. The flats of the matroid form a lattice under inclusion called the \textit{lattice of flats} of the matroid. 
A \textit{loop} of a matroid is an element $i\in E$ such that $r(i)=0$. A pair of \textit{parallel points} consists of points $i,j\in E$ such that $r(i,j)=1$. In most of our results, we  assume that the matroid is \textit{simple}, i.e., having neither loops nor parallel points. 

\begin{example}
\label{ex: uniform}
The \emph{uniform matroid} $U_{d+1,n}$ of rank $d+1$ on the ground set $E$ of size $n$ is defined to be the rank function $r$ such that for $S\subseteq E$:
\begin{equation}
    \label{eq: rankU}
    r(S) := \begin{cases} |S|\quad \text{if }|S|<d+1\\
    d+1\quad \text{otherwise}.
    \end{cases}
\end{equation}
Hence, for the uniform matroid any subset $S$ of size $|S|\leq d$ is a flat. 
\qed
\end{example}

The lattice of flats of a matroid $M$ can be represented by a polyhedral fan called its \textit{Bergman fan}, introduced in \cite{ardila2006bergman}, spanned by rays corresponding to the flats of the matroid. More precisely, let the vectors $e_i$ for $i\in E$ be the standard basis of $\mathbb{Z}^E$, and let $N$ be the quotient space $\mathbb{Z}^E/\mathbb{Z}\mathbf{1}$ where the vector $\mathbf{1}=\sum_{i\in E} e_i\in \mathbb{Z}^E$.
Moreover, let $u_i$ be the image of $e_i$ in $N$, and let the vector $u_S = \sum_{i\in S} u_i$ for a subset $S\subset E$.
\begin{definition}[{\cite[Section 3]{ardila2006bergman}}]
\label{def: Bergman}
Let $M$ be a loopless matroid of rank $r=d+1$ on a ground set $E$. The Bergman fan $\sum_M$ is the pure $d$-dimensional polyhedral fan in $N_\mathbb{R}:=N \otimes \mathbb{R}$ consisting of the cones 
$$\sigma_{\mathcal{F}}:=\text{Cone}(u_{F_1},u_{F_2},\cdots, u_{F_k})\subset N_{\mathbb{R}}$$
for each chain of flats $\mathcal{F}:\emptyset\subsetneq F_1\subsetneq \cdots \subsetneq F_k \subsetneq E$ in the lattice of flats of $M$.
\end{definition}
\begin{example}
The Bergman fan of the uniform matroid $U_{3,4}$ is a polyhedral fan of dimension 2 living in $\mathbb{Z}^4/\mathbb{Z}\mathbf{1}\otimes\mathbb{R}$, see \cref{fig: bergman fan U 34}. The fan consists of 12 cones, corresponding to the flags $\emptyset\subsetneq\{i\}\subsetneq\{i,j\}\subsetneq E$, for each pair $i\neq j \in E$. 
\begin{figure}
    \centering
    \includegraphics[scale=0.17]{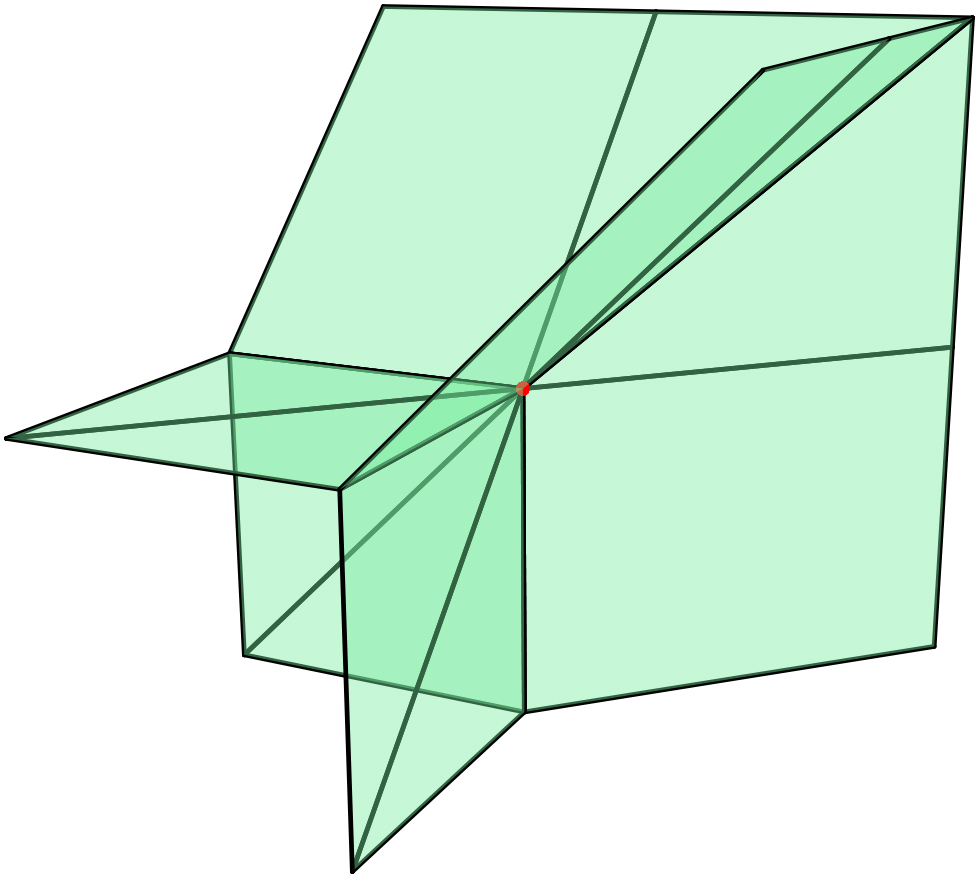}
    \caption{The Bergman fan of $U_{3,4}$}
    \label{fig: bergman fan U 34}
\end{figure}
\qed
\end{example}
Given a subset $S\subseteq E$, the \textit{deletion matroid} $M\setminus S$ is the matroid with ground set $E\setminus S$ and rank function $r_{M\setminus S}$ given by for a subset $U\subseteq E\setminus S$:
$$r_{M\setminus S}(U)= r(U\cup S)-r(S).$$
For a subset $T\subset E$ the \textit{restriction}  matroid $M|T$ is given by $M\setminus (E\setminus T)$. 
Finally, for a subset $X\subseteq E$, the \emph{contraction} $E/X$ is defined to be the matroid on the ground set $E\setminus X$ and rank function $r_{M/X}$ given by for a subset $S\subseteq E\setminus X$:
$$r_{M/X}(S)=r(S\cup X)-r(X).$$
Let $\beta(M)$ be the \textit{beta invariant} of $M$, see \cite{crapo1967higher}.
\begin{definition}[{\cite[Definition 4]{de2020chern}}]
\label{def: csm}
Let $M$ be a loopless matroid, the $k$-dimensional CSM cycle $\text{csm}_k(M)$ of $M$ is a $k$-dimensional Minkowski weight with weigths given by
$$\omega(\sigma_\mathcal{F}) := (-1)^{d-k} \prod^k_{i=0}\beta(M|F_{i+1}/F_i),$$
for a flag $\mathcal{F} := \{\emptyset = F_0 \subsetneq F_1 \subsetneq \cdots \subsetneq F_k \subsetneq F_{k+1} =\{1,...,n\}\}$. The minor $M|F_{i+1}/F_i$ is obtained by restricting to $F_{i+1}$ and contracting to $F_i$.
\end{definition}

\begin{example}
We compute $\text{csm}_k(U_{n,d+1})$. For a cone $\sigma_\mathcal{F}$ corresponding to a flag $\mathcal{F} :=\emptyset =F_0 \subseteq F_1 \subseteq \cdots \subseteq F_k \subseteq F_{k+1} =\{1,...,n\}$ such that $r(F_{i+1})-r(F_i) = 1$ for all flats $F_i$ in the flag, the weight is given by
\begin{align*}
    \omega(\sigma_\mathcal{F}) &= (-1)^{d-k} \binom{n-k-2}{d-k}. 
\end{align*}
Otherwise, the weight of the cone is zero. For example, for a top dimensional cone $\sigma_\mathcal{F}$ the weight is given by: 
\begin{align*}
    \omega(\sigma_\mathcal{F})=1.
\end{align*}
See Example 4. of  \cite{de2020chern} for the details.
{\qed}
\end{example}

We recall our main definition.
\cnumbers*
\begin{example}
\label{ex: beta}
Let $M$ be a rank $d+1$ matroid. The Chern number 
$$\overline{c}_d(M)=(-1)^d\beta(M),$$
which is simply the weight of the vertex of the Bergman fan of the matroid $M$.
\qed
\end{example}
\begin{proposition} 
\label{prop: ch uniform}
The Chern numbers of the uniform matroid $U_{d+1,n}$ are given by 
\begin{align*}
     \bar{c}_1^{k_1}\cdots \bar{c}_d^{k_d}(U_{d+1,n})=(-1)^d\prod_{i=1}^d\binom{n-(d-i)-2}{i}^{k_i}.
    \end{align*}
\end{proposition}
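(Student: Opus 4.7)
The plan is to reduce the combinatorial claim to a Chern class computation on $\mathbb{P}^d$. The uniform matroid $U_{d+1,n}$ is representable over $\mathbb{C}$ by any arrangement $\mathcal{A}$ of $n$ hyperplanes in general position in $\mathbb{P}^d_{\mathbb{C}}$. For such a generic $\mathcal{A}$, the divisor $D=\sum_{i=1}^n H_i$ already has simple normal crossings in $\mathbb{P}^d$, and the maximal wonderful compactification $W_{\mathcal{A}}\to \mathbb{P}^d$ is an iterated blow-up of $\mathbb{P}^d$ at smooth centers contained in strata of $D$. Since $\Omega^1(\log D)$ pulls back under such blow-ups (locally near a stratum, $\pi^* \frac{dx_i}{x_i}$ remains a generator of the log differentials), the projection formula shows that the Chern numbers on $W_{\mathcal{A}}$ and on $\mathbb{P}^d$ agree. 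Combining this with \cref{main}, I may replace $\bar{c}_1^{k_1}\cdots\bar{c}_d^{k_d}(U_{d+1,n})$ by the corresponding Chern number of $\Omega^1_{\mathbb{P}^d}(\log D)^*$.

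Next I would compute $c(\Omega^1_{\mathbb{P}^d}(\log D)^*)$ in $A^*(\mathbb{P}^d)=\mathbb{Z}[H]/(H^{d+1})$. Combining the dual residue sequence
$$0\longrightarrow T_{\mathbb{P}^d}(-\log D) \longrightarrow T_{\mathbb{P}^d}\longrightarrow \bigoplus_{i=1}^n N_{H_i/\mathbb{P}^d}\longrightarrow 0$$
with the Euler sequence for $\mathbb{P}^d$ and using $[\mathcal{O}_{H_i}(H)]=[\mathcal{O}(H)]-[\mathcal{O}]$ in $K_0(\mathbb{P}^d)$ yields
$$[T_{\mathbb{P}^d}(-\log D)]=(d+1-n)[\mathcal{O}(H)]+(n-1)[\mathcal{O}].$$
Multiplicativity of the total Chern class in K-theory then gives $c(\Omega^1_{\mathbb{P}^d}(\log D)^*)=(1+H)^{d+1-n}$, and the negative-binomial identity $\binom{d+1-n}{i}=(-1)^i\binom{n-(d-i)-2}{i}$ lets me read off
$$c_i(\Omega^1_{\mathbb{P}^d}(\log D)^*)=(-1)^i\binom{n-(d-i)-2}{i}\,H^i.$$

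Finally, since $\sum_i ik_i=d$, multiplying out gives
$$\prod_{i=1}^d c_i(\Omega^1_{\mathbb{P}^d}(\log D)^*)^{k_i}=(-1)^d \prod_{i=1}^d \binom{n-(d-i)-2}{i}^{k_i}\; H^d,$$
and $\int_{\mathbb{P}^d}H^d=1$ produces the claimed formula. The main obstacle is the compactification-invariance step in the first paragraph: I would verify it either by induction on the sequence of blow-ups together with the identity $\Omega^1_{\tilde X}(\log \tilde D)\cong \pi^*\Omega^1_X(\log D)$ for blow-ups at strata of an SNC divisor, or, more invariantly, by appealing to Aluffi's theorem together with the covariance $\pi_* c_{\mathrm{SM}}(\mathbf{1}_{C(\mathcal{A})})=c_{\mathrm{SM}}(\mathbf{1}_{C(\mathcal{A})})$ to argue that the degrees of the products of graded components of $c_{\mathrm{SM}}(\mathbf{1}_{C(\mathcal{A})})$ computed on $W_{\mathcal{A}}$ and on $\mathbb{P}^d$ coincide.
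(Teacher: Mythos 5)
Your proof is correct, but it takes a genuinely different route from the paper. The paper stays entirely inside tropical geometry: it invokes Example 15 of \cite{de2020chern}, which says that the $k$-th power of the Bergman fan class of $U_{d+1,n}$ (with unit weights) is the Bergman fan of $U_{d-k+1,n}$ with unit weights, so each $\textnormal{csm}_{d-i}(U_{d+1,n})$ is the scalar $(-1)^i\binom{n-(d-i)-2}{i}$ times that fan class, and the product collapses immediately to the stated formula. You instead use the representability of $U_{d+1,n}$ by $n$ generic hyperplanes together with \cref{main}, reduce to $\mathbb{P}^d$, and compute $c(T_{\mathbb{P}^d}(-\log D))=(1+H)^{d+1-n}$ via the residue and Euler sequences; your K-theory bookkeeping and the negative-binomial identity check out, and there is no circularity since the proof of \cref{main} does not use this proposition. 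The one step you rightly flag as the obstacle, invariance under passing to the wonderful compactification, is handled correctly by your first suggested argument: for a generic arrangement $D$ is already SNC, the centers of the wonderful blow-ups are strata, and $\Omega^1_{\tilde X}(\log\tilde D)\cong\pi^*\Omega^1_X(\log D)$ plus the projection formula preserves all degree-$d$ monomials in the Chern classes. Your alternative via CSM covariance alone would \emph{not} suffice, since $\pi_*$ does not commute with products of classes; it only works in combination with the pullback identity. The trade-off: your argument yields a clean closed form and a geometric explanation, but relies on Aluffi's theorem and the machinery behind \cref{main}, whereas the paper's argument is shorter and purely combinatorial, requiring no representability input.
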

\begin{proof} 

Let the fan $C\in MW_*(\Sigma_{U_{d,n}})$ be the Bergman fan of $U_{d,n}$ with weight 1 on all its top dimensional cones. Then, by Example 15 in \cite{de2020chern} and \cref{ex: beta}, 
 the fan $C^k\in MW_*(\Sigma_{U_{d-k+1,n}})$ is the Bergman fan of $U_{d-k+1,n}$ with weight 1 on all its top dimensional cones. Hence, the Chern numbers of $U_{d+1,n}$ are given by
\begin{align*}
    \bar{c}_1^{k_1}\cdots \bar{c}_d^{k_d}(U_{d+1,n})=&\text{wt}_0\Bigg(\prod_{i=1}^d(-1)^{k_i(d-(d-i))}\binom{n-(d-i)-2}{d-(d-i)}^{k_i}C^{ik_i}\Bigg)\\
    =&(-1)^d\prod_{i=1}^d\binom{n-(d-i)-2}{i}^{k_i},
\end{align*}
where the last equality follows from the fact that the fan $C^d$ is just the vertex of the fan with weight 1, and from the fact that $\sum_{i=1}^d ik_i=d$.
\end{proof}

When the matroid is representable over $\mathbb{C}$ the Chern numbers of the matroid have in fact a geometric meaning. 
\main*
\begin{proof}
Feichtner and Yuzvinsky have proven that for a representable matroid we have the follwoing isomorphism $A^*(M_\mathcal{A})\simeq A^*(W_\mathcal{A})$ \cite{feichtner2004chow}. Moreover, by Poincar\'e duality for the wonderful compactification, the following map  is an  isomorphism of rings:
\[ [W_\mathcal{A}] \cap  \colon A^{\ast}(W_{\mathcal{A}}) \to A_{d-\ast}(W_{\mathcal{A}}), \]
and it respects the graded structures as indicated. This isomorphism is compatible with the degree maps $A^d(W_{\mathcal{A}}) \to \mathbb{Z}$
and $A_0(W_{\mathcal{A}}) \to \mathbb{Z}$, see Definition 5.9 in \cite{adiprasito2018hodge} for a construction of the degree map. 
\todo[inline]{You could cite AHK here but since this is the alg geo setting it is more standard.}

By Theorem 1.1 of \cite{aluffi2012chern} $$[W_\mathcal{A}] \cap \text{c}(\Omega^1_{W_\mathcal{A}}(\text{log} D)^*)  =\text{CSM}(1_{C(\mathcal{A})}).$$

Therefore,
$$[W_\mathcal{A}] \cap \text{c}_k(\Omega^1_{W_\mathcal{A}}(\text{log} D)^*) = \text{CSM}_{d-k}(1_{C(\mathcal{A})}) \in A_{d-k}(W_{\mathcal{A}}).$$ Furthermore, the isomorphism $A_{\ast}(W_{\mathcal{A}}) \simeq MW_{\ast}(\Sigma_{\mathcal{A}})$ follows by the Poincar\'e duality for the wonderful compactification and the linear duality $$MW_k(\Sigma_\mathcal{A})\simeq Hom_\mathbb{Z}(A^k(W_\mathcal{A}),\mathbb{Z})$$ proven in Proposition 5.6 in \cite{adiprasito2018hodge}. Moreover, by Theorem 3.1 of \cite{de2020chern}, we have $$\text{CSM}(1_{C(\mathcal{A}}))
= 
\sum^d_{k=0}\text{csm}_k(M_\mathcal{A}) \in A_*(W_\mathcal{A})\simeq MW_*(\Sigma_\mathcal{A}).
$$

Finally, combining these two results we have that 
$$[W_\mathcal{A}] \cap \text{c}_k(\Omega^1_{W_\mathcal{A}}(\text{log} D)^*) = \text{csm}_{d-k}(M_\mathcal{A}).$$ Since the cap product with $[W_{\mathcal{A}}]$ induces a ring isomorphism compatible with the degree maps we obtain the equality in the statement of the theorem. 
\end{proof}
\section{Chern numbers of matroids of rank 3}
\label{sec: ch3}
In this section we investigate properties of Chern numbers of matroids of rank 3. We first give a formula for the Chern numbers in terms of the number of flats. 
This formula generalizes the formula of Chern numbers of line arrangements defined and studied in \cite{eterovic2022geography}.
\begin{proposition}
\label{prop: Chern3}
Let $M$ be a simple matroid of rank $3$ on a ground set $E$ of size $n$. Let $t_m$ be the number of flats $F$ of rank $2$ of size $m$, then the Chern numbers of $M$ are 
\begin{align*}
   \overline{c}_1^2(M)=9-5n+\sum_{m\geq2}(3m-4)t_m,\\
   \overline{c}_2(M)=3-2n+\sum_{m\geq 2}(m-1)t_m. 
\end{align*}
\end{proposition}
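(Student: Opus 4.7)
The plan is to compute each CSM cycle $\text{csm}_k(M)$ for $k=0,1,2$ directly from \cref{def: csm}, and then to evaluate $w(\text{csm}_1(M)\cdot \text{csm}_1(M))$ by the tropical intersection product on the $2$-dimensional Bergman fan $\Sigma_M$. The formula for $\overline{c}_2(M)$ drops out immediately from the weight of $\text{csm}_0(M)$ at the origin, so the real work lies in the self-intersection.

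For $\overline{c}_2(M)$, \cref{ex: beta} reduces the problem to computing $\beta(M)$ combinatorially. For a simple rank $3$ matroid the Möbius values on the lattice of flats are transparent: $\mu(\emptyset,\emptyset)=1$, $\mu(\emptyset,\{i\})=-1$ on each rank $1$ flat, and $\mu(\emptyset,F)=m-1$ on a rank $2$ flat $F$ of size $m$. Collecting terms yields $\chi_M(\lambda)=\lambda^3 - n\lambda^2 + \sum_{m\geq 2}(m-1)t_m\,\lambda + \mu(\emptyset,E)$, so dividing by $\lambda-1$ and evaluating the reduced characteristic polynomial at $\lambda=1$ produces $\beta(M)=3-2n+\sum_{m\geq 2}(m-1)t_m$, as required.

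For $\text{csm}_1(M)$, I apply \cref{def: csm} to each ray $\rho_F$, noting that the weight is $-\beta(M|F)\beta(M/F)$. For a rank $1$ flat $\{i\}$ the restriction $M|\{i\}=U_{1,1}$ has $\beta=1$, while the contraction $M/i$ is a loopless rank $2$ matroid whose rank $1$ flats biject with the rank $2$ flats of $M$ through $i$; writing $\ell_i$ for that count, the standard formula $\beta = (\text{number of rank }1\text{ flats}) - 2$ for a loopless rank $2$ matroid gives $\beta(M/i)=\ell_i-2$ and hence $w(\rho_{\{i\}}) = 2-\ell_i$. For a rank $2$ flat $F$ of size $m$, simplicity forces $M|F=U_{2,m}$ and $M/F=U_{1,n-m}$, giving $w(\rho_F) = -(m-2)\cdot 1 = 2-m$. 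As a consistency check, one verifies directly that this $1$-weight is balanced in $N_\mathbb{R}$, using the incidence identity $\sum_{F\ni i}|F| = (n-1)+\ell_i$ valid in a simple rank $3$ matroid.

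The hardest step, and the main obstacle, is computing the self-intersection $\text{csm}_1(M)\cdot\text{csm}_1(M) \in MW_0(\Sigma_M)\simeq \mathbb{Z}$. My plan is to work in the Chow ring of $\Sigma_M$ in the Adiprasito–Huh–Katz presentation (equivalently, to apply the fan-displacement rule on the $2$-dimensional Bergman fan): since the $2$-cones of $\Sigma_M$ correspond to covering pairs $\{i\}\subsetneq F$ in the lattice of flats, one expands $\text{csm}_1(M)^2$ as a sum over such incidences, weighted by products of the form $(2-\ell_i)(2-|F|)$ together with the relevant lattice indices, plus cross terms arising from generic displacement. The terms involving $\ell_i$ should then collapse through the incidence identity $\sum_i \ell_i = \sum_{m\geq 2}m\,t_m$ and its weighted refinements, leaving the closed formula $\overline{c}_1^2(M) = 9 - 5n + \sum_{m\geq 2}(3m-4)t_m$. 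The delicate point is to track signs and lattice indices correctly and to verify that the $\ell_i$-dependent terms cancel; the shape of the target formula, which depends only on $n$ and the $t_m$, in effect forces this cancellation and serves as a guide during the bookkeeping.
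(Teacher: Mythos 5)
Your derivation of $\overline{c}_2(M)$ is complete and is essentially the paper's own argument: compute the M\"obius values on the lattice of flats, assemble the characteristic polynomial, and evaluate the reduced characteristic polynomial at $1$ to get $\beta(M)=3-2n+\sum_{m\geq 2}(m-1)t_m$. Your computation of the weights of $\textnormal{csm}_1(M)$ (namely $2-\ell_i$ on the ray of a rank $1$ flat $\{i\}$ lying on $\ell_i$ rank $2$ flats, and $2-m$ on the ray of a rank $2$ flat of size $m$) is also correct.

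The gap is in the first formula: you never actually compute the self-intersection $w(\textnormal{csm}_1(M)^2)$. What you give is a plan, and its crucial step --- that the $\ell_i$-dependent contributions cancel --- is asserted rather than proved. This is not a cosmetic omission: quantities such as $\sum_i \ell_i^2$ (or $\sum_i(2-\ell_i)^2$) are \emph{not} determined by $n$ and the $t_m$ alone, so if such terms survive the expansion the method does not even produce a formula of the claimed shape; verifying the cancellation is therefore the entire content of this half of the proposition. Using the target formula ``as a guide during the bookkeeping'' is circular unless the bookkeeping is actually carried out. To close the gap you can either perform the stable self-intersection in full (choose a generic displacement, list the crossing points on the two-cones $\sigma_{\{i\}\subsetneq F}$, compute lattice indices, and exhibit the cancellation), or do what the paper does: invoke the known self-intersection formula for curves on the Bergman fan of a rank $3$ matroid (Definition 2.16 and Equation 3.2 of \cite{shaw2015tropical}), which gives $\overline{c}_1^2(M)=(3-n)^2-\sum_{m\geq 2}(2-m)^2t_m$ directly, and then convert this to the stated form using the identity $n^2-n=\sum_{m\geq 2}(m^2-m)t_m$ --- which is just the double count of pairs of ground-set elements, each lying in a unique rank $2$ flat (the paper proves it by induction on $n$). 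Either route would complete your argument; as written, the first formula is unproved.
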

\begin{proof}
For the first statement, we find that the weight associated to the self intersection of the 1-dimensional csm cycle of the Bergman fan of $M$ is 
\begin{equation}
\label{eq: c_1_squared}
    \overline{c}_1^2(M)=(3-n)^2-\sum_{m\geq 2} (2-m)^2t_m,
\end{equation}
 by Definition 2.16 and Equation 3.2 in \cite{shaw2015tropical}. Moreover, we prove that \cref{eq: c_1_squared} equals the first statement of \cref{prop: Chern3} by proving that
 \begin{equation}
 \label{eq: diff}
     n^2-n = \sum_{m\geq 2}(m^2-m) t_m
 \end{equation}
 by induction on $n$. For $n = 3$ the matroid consists of 3 flats of rank 2 of size 2, hence \cref{eq: diff} holds. 
 Assume now that \cref{eq: diff} holds for any matroid $M$ of size $|E|=n$. Then, adding an element to the matroid $M$ increases the left hand side by $2n$. For the right hand side, we first denote by $M'$ the matroid $M$ with one element $i$ added. Note that the rank 2 flats of $M'$ consist of those not containing $i$, which are also rank 2 flats of $M$, and those containing $i$. Now, let $\mathcal{F}_i$ be the set of rank 2 flats of $M'$ containing $i$. The set $\mathcal{F}_i$ equals the disjoint union $\mathcal{F}_i=\mathcal{F}_1 \cup \mathcal{F}_2$, where $\mathcal{F}_1$ consists of the flats $F\in \mathcal{F}_i$ such that $F\setminus \{i\}$ is a rank 2 flat of $M$, and $\mathcal{F}_2$ consists of the flats $F\in \mathcal{F}_i$ such that $F\setminus \{i\}$ is a flat of rank 1 in $M$, hence flats of size 2. Remark that, by the covering axiom of flats, the size $|\mathcal{F}_2|=n-\sum_{F\in \mathcal{F}_{1}}(|F|-1)$. Then, adding an element $i$ to the matroid $M$ changes the right hand side of \cref{eq: diff} by
\begin{align*}
    \sum_{F\in \mathcal{F}_1}(|F|^2-|F|)-\sum_{F\in \mathcal{F}_1}((|F|-1)^2-(|F|-1))&\\
    +2(n-\sum_{F\in \mathcal{F}_{1}}(|F|-1))&=2n.
\end{align*}

This proves \cref{eq: diff}, hence the first statement in \cref{prop: Chern3}.

Recall that by \cref{ex: beta} the Chern number 
$$\overline{c}_d(M)=(-1)^d\beta(M).$$ Hence, for proving the second statement it suffices to compute $\beta(M)$. The characteristic polynomial is given by
\begin{align*}
    &\chi_{M}(\lambda)= \lambda^3-n\lambda^2+\sum_{m\geq 2}(m-1)t_m\lambda-(1-n+\sum_{m \geq 2}(m-1)t_m), 
\end{align*}
where we have used that for flats of rank 2 we have that
$$\sum_{r(F)=2}\mu(\emptyset, F)=\sum_{m\geq2}(m-1)t_m.$$
Then, the beta invariant is given by 
\begin{align*}
    \overline{\chi_{M}}(1)=3-2n+\sum_{m\geq2}(m-1)t_m,
\end{align*}
which equals the Chern number $\overline{c}_2(M)$.
\end{proof}

\begin{remark}
Recall that from a line arrangement, which is uniquely defined by a surface $Y$, we obtain a matroid via \cref{ex: hypArr}. Recall that in \cite{eterovic2022geography} the authors give a combinatorial definition of Chern numbers of line arrangements in terms of the incidence structure. Their definition is inspired from the work of Hirzebruch and they claim that these numbers are the Chern numbers of the corresponding log surface, see their Remark 3.2. Therefore, in the case of representable matroids \cref{prop: Chern3} follows from \cref{main} and Remark 3.2 of \cite{eterovic2022geography}. 
\end{remark}

Matroids arising from finite projective planes $PG(2,q)$ have particularly nice properties.
\begin{example}
\label{ex: ch PG(2 q)}
The finite projective plane matroid $PG(2,q)$ is a matroid on a ground set $E$ of size $q^2+q+1$ having $q^2+q+1$ flats of rank 2 of size $q+1$. Hence, its Chern numbers are given by 
\begin{align*}
    \overline{c}_1^2(PG(2,q))&=3(q^3-q^2-q+1),\\
    \overline{c}_2(PG(2,q))&=q^3-q^2-q+1. 
\end{align*}

For example, the Fano plane $PG(2,2)$ has Chern numbers 
\begin{align*}
    \overline{c}_1^2(PG(2,2))&=9,\\
    \overline{c}_2(PG(2,2))&=3.
\end{align*}
{\qed}
\end{example}
\begin{example}
 The \textit{non-Pappus matroid} is obtained from the pseudoline arrangement in \cref{fig: non-Pappus}. We compute the Chern numbers of the non-Pappus matroid: $(\overline{c}_1^2(M),\overline{c}_2(M))=(28,13)$. 
Note, that the non-Pappus matroid violates Pappus's Theorem, hence it is not representable over any field. However, there exists a representable matroid having the same Chern numbers which is given by the line arrangement in \cref{fig: Line arrangement}.
\begin{figure}[h!]
  \centering
  \begin{minipage}[b]{0.4\textwidth}
  \hspace*{1cm}\scalebox{.75}{
\begin{tikzpicture}[line cap=round,line join=round,>=triangle 45,x=1cm,y=1cm]
\draw [line width=0.5pt] (-1.72,3.99)-- (2,0);
\draw [line width=0.5pt] (-1.72,3.99)-- (0,0);
\draw [line width=0.5pt] (0.72,3.99)-- (-2,0);
\draw [line width=0.5pt] (0.72,3.99)-- (2,0);
\draw [line width=0.5pt] (2.74,4.05)-- (0,0);
\draw [line width=0.5pt] (2.74,4.05)-- (-2,0);
\draw [shift={(0.23,1.905)},line width=0.5pt]  plot[domain=0.10427161192765513:3.245864265517448,variable=\t]({1*0.4323482392701513*cos(\t r)+0*0.4323482392701513*sin(\t r)},{0*0.4323482392701513*cos(\t r)+1*0.4323482392701513*sin(\t r)});
\draw [line width=0.5pt] (-2.3,4.006843575418994)-- (3,4.0586592178770955);
\draw [line width=0.5pt] (-2.3,0)-- (3,0);
\draw [line width=0.5pt] (-0.2,1.86)-- (-2.3,1.6425);
\draw [line width=0.5pt] (0.66,1.95)-- (3,2.1952095808383234);
\end{tikzpicture}}
    \caption{ }
    \label{fig: non-Pappus}
  \end{minipage}
  \hfill
  \begin{minipage}[b]{0.5\textwidth}
    \scalebox{0.6}{\begin{tikzpicture}[rotate=90, line cap=round,line join=round,>=triangle 45,x=1cm,y=1cm]
\clip (-3,3) rectangle + (7,-7);
		\node   (0) at (0, 6) {};
		\node   (1) at (-3, 6) {};
		\node   (2) at (3, 6) {};
		\node   (3) at (0, -6) {};
		\node   (4) at (1, -6) {};
		\node   (5) at (2, -6) {};
		\node   (6) at (3, -6) {};
		\node   (7) at (-1, -6) {};
		\node   (8) at (-2, -6) {};
		\node   (9) at (-3, -6) {};
		\node   (10) at (-4, -6) {};
		\node   (11) at (4, -6) {};

		\draw[line width=0.5pt] (1.center) -- (11.center);
		\draw[line width=0.5pt] (1.center) -- (6.center);
		\draw[line width=0.5pt] (1.center) -- (5.center);
		\draw[line width=0.5pt] (0.center) -- (3.center);
		\draw[line width=0.5pt] (0.center) -- (7.center);
		\draw[line width=0.5pt] (0.center) -- (4.center);
		\draw[line width=0.5pt] (2.center) -- (10.center);
		\draw[line width=0.5pt] (2.center) -- (9.center);
		\draw[line width=0.5pt] (2.center) -- (8.center);
\end{tikzpicture}}
    \caption{ }
    \label{fig: Line arrangement}
  \end{minipage}
\end{figure}
\end{example}
We now present results on the bounds of Chern numbers of simple matroids of rank 3. 
\begin{proposition}
\label{prop: ch positive}
Let M be a simple matroid of rank $3$ on the ground set $E = \{1,\ldots,n\}$. The Chern numbers are bounded by 
\begin{align*}
    0\leq&\overline{c}_1^2(M)\\
    0\leq&\overline{c}_2(M). 
\end{align*}
Left inequality holds if and only if $M$ has a coloop. 
\end{proposition}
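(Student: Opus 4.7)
The plan is to derive both bounds from the explicit formulas
\[
\overline{c}_1^2(M) = 9 - 5n + \sum_{m \geq 2}(3m-4)t_m, \qquad \overline{c}_2(M) = 3 - 2n + \sum_{m \geq 2}(m-1)t_m
\]
given in Proposition \ref{prop: Chern3}; the two inequalities are handled separately.

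For $\overline{c}_2(M) \geq 0$ the proof is immediate: by Example \ref{ex: beta}, $\overline{c}_2(M)$ equals the beta invariant $\beta(M)$, which is non-negative for every matroid by Crapo's classical theorem \cite{crapo1967higher}.

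For $\overline{c}_1^2(M) \geq 0$ (and the equality characterization), I would induct on $n = |E|$. The base case $n = 3$ forces $M = U_{3,3}$, in which case Proposition \ref{prop: Chern3} gives $\overline{c}_1^2(M) = 0$ and every element is a coloop. For $n \geq 4$, if $M$ has a coloop $i$ then $M = \{i\} \oplus U_{2, n-1}$ is a near-pencil (one rank-$2$ flat of size $n-1$ together with $n-1$ flats of size $2$), and substituting into the formula gives $\overline{c}_1^2(M) = 0$ directly. Otherwise fix any $i \in E$ and set $M' := M \setminus i$, which is simple of rank $3$ on $n-1$ elements. The rank-$2$ flats of $M'$ arise from those of $M$ as follows: a size-$2$ line through $i$ disappears, a line of size $\geq 3$ through $i$ shrinks by one, and a line disjoint from $i$ is unchanged. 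Substituting into Proposition \ref{prop: Chern3} yields
\[
\overline{c}_1^2(M) - \overline{c}_1^2(M') = -5 + 3L_i - a_i,
\]
where $L_i$ is the number of rank-$2$ flats through $i$ and $a_i \leq L_i$ is the number of those of size $2$. A brief case analysis shows this difference is non-negative: if $L_i \geq 3$ then it is at least $2L_i - 5 \geq 1$; if $L_i = 2$ the two lines through $i$ partition $E \setminus \{i\}$ into parts summing to $n-1 \geq 3$, so at least one of them contributes more than one point, forcing $a_i \leq 1$. The inductive hypothesis then gives $\overline{c}_1^2(M) \geq 0$.

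For the equality statement, suppose $\overline{c}_1^2(M) = 0$ and $M$ has no coloop. Applied with every $i \in E$ the induction forces $-5 + 3L_i - a_i = 0$, whose only solution compatible with $L_i \geq 2$ and $a_i \leq L_i$ is $L_i = 2$, $a_i = 1$, so each element lies on exactly one size-$2$ line and one size-$(n-1)$ line. But two distinct lines of size $n-1$ would share at least $n-2 \geq 2$ points, contradicting that two rank-$2$ flats meet in at most one point, and a single such line cannot contain every element. This contradiction shows $M$ must have a coloop. The main technical obstacle is the bookkeeping for the difference $\overline{c}_1^2(M) - \overline{c}_1^2(M')$ and the final case analysis, both of which reduce to careful manipulation of the formula in Proposition \ref{prop: Chern3}.
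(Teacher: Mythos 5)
Your proposal is correct and follows essentially the same route as the paper: $\overline{c}_2(M)\geq 0$ via the beta invariant, and $\overline{c}_1^2(M)\geq 0$ by induction on $n$, deleting one element and tracking how the formula of \cref{prop: Chern3} changes under deletion. Your version is in fact slightly more complete than the paper's: by deleting an arbitrary element and handling the case of only two lines through it via the partition of $E\setminus\{i\}$, you avoid the paper's unjustified assertion that a coloop-free matroid contains an element on at least three rank-$2$ flats, and your forced-structure argument ($L_i=2$, $a_i=1$ for all $i$) gives a clean direct proof of the "equality implies coloop" direction.
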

\begin{proof}
Recall that the Chern number $\overline{c}_2(M)=\beta(M)$. Moreover,  the beta invariant $\beta({M})\geq 0$ and $\beta({M})= 0$ if and only if $M$ is disconnected or a loop, see \cite{crapo1967higher}. 
Finally, note that $M$ disconnected implies that $M$ has a coloop. 

Now, we prove by induction on $n$, that $\overline{c}_1^2(M)$ is positive if $M$ has no coloops. Assume that $n=4$ and recall that $t_m$ denotes the number of flats of rank $2$ of size $m$. Then $t_n=t_{n-1}=0$ implies $t_2=6$; so the Chern number of the matroid is  $\overline{c}_1^2(M)=1$. Assume now that $M$ is a matroid on a ground set $|E|=n\geq 5$, and let $i\in E$ be an element contained in at least three flats of rank 2. Note that such an element must exist by the assumption of no coloops. 

Let $\mathcal{F}_i=\{F_1\ldots F_t\}$ be the rank 2 flats of $M$ containing $i$, and denote by $M\setminus i$ the deletion matroid. We partition $\mathcal{F}_i$ as before, i.e., we let the set $\mathcal{F}_i=\mathcal{F}_1 \cup \mathcal{F}_2$, where $\mathcal{F}_1$ consists of flats $F$ such that $F\setminus i$ is a flat of rank 2 of $M\setminus i$, and $\mathcal{F}_2$ consists of flats $F$ of rank 2 such that $F\setminus i$ is a flat of rank 1 of $M\setminus i$. Recall also that the size of $\mathcal{F}_2$ is $|\mathcal{F}_2|=n-\sum_{F\in \mathcal{F}_1}(|F|-1)$. Then, the Chern number $\overline{c}_1^2(M)$ is given by
\begin{align*}
   \overline{c}_1^2(M)=\ & \overline{c}_1^2(M\setminus i)-5 - \sum_{F\in \mathcal{F}_1}(3(|F|-1)-4) + \sum_{F\in \mathcal{F}_1}(3|F|-4)\\
   \ &+ 2(n-\sum_{F\in \mathcal{F}_1}(|F|-1))\\
   =\ & \overline{c}_1^2(M\setminus i)-5 + \sum_{F\in \mathcal{F}_1} 1 + 2(\sum_{F\in \mathcal{F}_1} 1+ n - \sum_{F\in \mathcal{F}_1} (|F|-1))\\
   \geq\ & \overline{c}_1^2(M\setminus i)-5 +2t\\
   \geq\ & \overline{c}_1^2(M\setminus i)+1\\
   >\ & 1.
 \end{align*}
The last inequality follows from the induction hypothesis, i.e., that the Chern number $\overline{c}_1^2(M\setminus i)\geq 0$. 

Finally, if $M$ has a coloop, then $M$ has $t_{2} = n-1$ and $t_{n-1}=1$, whereas $t_k=0$ for all other $k$'s. By inserting these values for $c_1^2(M)$, we get that $c_1^2(M)=0.$
 \end{proof}
We also find an upper bound for the Chern numbers of simple matroids of rank 3.
\newbounds*
\begin{proof}
Left inequality is proven in \cref{prop: ch positive}. For the right inequality we begin with the first statement. Recall, by \cref{eq: c_1_squared}, that the Chern number $\overline{c}_1^2(M)$ of an arbitrary matroid is given by
\begin{align*}
    \overline{c}_1^2(M)=(3-n)^2-\sum_{m\geq 2} (2-m)^2t_m, 
\end{align*}
whereas the Chern number $\overline{c}_1^2(U_{3,n})$ of the uniform matroid $U_{3,n}$ is given by 
\begin{align*}
    \overline{c}_1^2(U_{3,n})=(3-n)^2.
\end{align*}
Then, since both the Chern number $\overline{c}_1^2(M)$ and the sum $\sum_{m\geq 2} (2-m)^2t_m$ of an arbitrary matroid are positive, we get the following inequality
$$\overline{c}_1^2(M)\leq (3-n)^2=\overline{c}_1^2(U_{3,n}).$$
The second statement follows from the inequality $\beta(M)\leq \beta(U_{d,n})$ when $M$ is of rank $\text{r}(M)=d$ on a ground set $|E|=n$. This is due to Brylawski, see Corollary 7.14 in \cite{brylawski1972decomposition}.
\end{proof}
We conjecture that a generalization of \cref{prop: new bounds} holds for a matroid of any rank. 
\begin{conj}
\label{conj: bounduni}
    Let $M$ be a simple matroid of rank $d+1$ on a ground set $E$ of size $n$. Then the Chern numbers of $M$ are bounded by 
$$|\bar{c}_1^{k_1}\cdots \bar{c}_d^{k_d}(M)| \leq |\bar{c}_1^{k_1}\cdots \bar{c}_d^{k_d}(U_{d+1,n})|$$
with equality if and only if $M=U_{3,n}$.
\end{conj}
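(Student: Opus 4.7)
The plan is to proceed by induction on the size $n$ of the ground set, extending the strategy used in \cref{prop: new bounds}. The base case $n = d+1$ is immediate: the only simple matroid of rank $d+1$ on $d+1$ elements is $U_{d+1,d+1}$ itself. For the inductive step, I would express each Chern number $\bar{c}_1^{k_1}\cdots \bar{c}_d^{k_d}(M)$ as a weighted sum over top-dimensional cones of $\Sigma_M$, where by \cref{def: csm} together with a product formula for Minkowski weights (e.g.\ the Fran\c{c}ois--Rau diagonal formula, or equivalently Shaw's local intersection theory), the contribution of each cone is a polynomial in beta invariants of minors $M|F/F'$. Brylawski's inequality $\beta(N) \leq \beta(U_{\mathrm{rk}(N),|N|})$ would then give a term-by-term comparison with the analogous sum for $U_{d+1,n}$, provided that the two summations can be indexed compatibly.

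The main obstacle is that $\Sigma_M$ and $\Sigma_{U_{d+1,n}}$ have different supports, so a naive cone-by-cone comparison fails. To bridge this, I would push both Minkowski weights forward to a common refinement, namely the permutohedral fan (equivalently, the Chow ring of the Boolean matroid $U_{n,n}$), and perform the comparison there, where the ambient ring is fixed and the CSM cycles of an arbitrary rank $d+1$ matroid become identifiable as pullbacks/specializations of those of $U_{d+1,n}$. An alternative approach, available when $M$ is representable, would use \cref{main} to reinterpret the Chern numbers as intersection numbers on the wonderful compactification $W_\mathcal{A}$, and to exploit the birational morphism from the permutohedral variety (the wonderful compactification of $U_{d+1,n}$) down to any such $W_\mathcal{A}$; one would then argue the inequality via positivity of the comparison along the exceptional loci.

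For the equality case, observe that the Chern number $\bar{c}_d(M) = (-1)^d \beta(M)$ already pins down $M$: by Brylawski's theorem, $\beta(M) = \beta(U_{d+1,n})$ if and only if $M = U_{d+1,n}$. So as soon as the inductive argument is in place, equality for any product involving the $\bar{c}_d$ factor forces uniformity, and the general equality case should follow by verifying that the collection of term-by-term Brylawski inequalities are simultaneously saturated only when $M = U_{d+1,n}$. The hardest technical step will almost certainly be the first one: identifying an explicit enough polynomial-in-beta-invariants formula for arbitrary mixed products $\bar{c}_1^{k_1}\cdots \bar{c}_d^{k_d}$ so that Brylawski's bound can actually be applied term by term, since the existing product formulas on $MW_*(\Sigma_M)$ do not obviously yield sign-coherent decompositions.
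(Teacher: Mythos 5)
The statement you are trying to prove is Conjecture~\ref{conj: bounduni}: the paper states it as an open conjecture and offers no proof (only the rank-$3$ special case, Proposition~\ref{prop: new bounds}, is established, by a completely explicit computation with the formula of Proposition~\ref{prop: Chern3} together with Brylawski's bound on $\beta$). So there is no argument in the paper to compare yours against, and your text should be judged as a research plan rather than a proof. As a plan it identifies the right ingredients, but it does not close any of the steps it acknowledges as hard, and those are precisely where the conjecture is open.

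Concretely: (i) the ``polynomial in beta invariants of minors'' expansion of a mixed product $\bar{c}_1^{k_1}\cdots\bar{c}_d^{k_d}(M)$ is not available in a sign-coherent form --- the CSM cycles carry the alternating signs $(-1)^{d-k}$ of Definition~\ref{def: csm}, and the stable-intersection/diagonal products produce weights that mix signs, so ``term-by-term Brylawski'' cannot even be formulated without first proving an effectivity or cancellation-free statement that is not known; note the conjecture itself is stated with absolute values for exactly this reason. (ii) Even granting a cone-by-cone bound, the flags of flats of $M$ and of $U_{d+1,n}$ form different index sets of different cardinalities, and pushing both weights to the permutohedral fan does not make the CSM cycle of $M$ a ``pullback or specialization'' of that of $U_{d+1,n}$ --- that identification is asserted, not proved, and is false at the level of individual cone weights. (iii) Your equality analysis only treats Chern numbers with $k_d\geq 1$, where the factor $\bar{c}_d(M)=(-1)^d\beta(M)$ lets you invoke Brylawski; for products such as $\bar{c}_1^{\,d}$ it says nothing, and even in rank $3$ the equality $\overline{c}_1^2(M)=\overline{c}_1^2(U_{3,n})$ requires the separate argument that $\sum_{m\geq 2}(2-m)^2t_m=0$ forces all rank-$2$ flats to have size $2$. (iv) The representable-case detour via Theorem~\ref{main} would at best prove a special case and still needs a positivity statement for Chern classes of the log (co)tangent bundle under the birational map from the permutohedral variety, which is not supplied. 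In short, the proposal is a reasonable outline of how one might attack the conjecture, but every load-bearing step remains open.
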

\begin{remark}
In algebraic geometry, the  volume polynomial measures the top self-intersection number of divisors on a smooth projective variety. Eur introduces an analogous volume polynomial for matroids that agrees with the classical volume polynomial of the wonderful compactification when the matroid is representable \cite{eur2020divisors}. Moreover, he introduces a new invariant called the shifted rank-volume of a matroid $\text{shRVol}(M)$ as a particular specialization of its volume polynomial and proves that for representabe matroids $\text{shRVol}(M)\leq \text{shRVol}(U_{d,n})$ with equality if and only if $M=U_{d,n}$. Recall, that 
$\overline{c}_1^d(M)=w(\text{csm}_{d-1}(M))$ and that by the following isomorphism $MW_*(\Sigma_M)\simeq A^*(M)$ the CSM-cycle $\text{csm}_{d-1}(M)$ corresponds to a divisor $ch_1\in A^1(M)$. Recently, Rincón and Fife have proven a formula for the CSM-cycles of a matroid in the Chow ring of the matroid \cite{fife_2022}. We can use their formula to show that $ch_1(M)$ and Eur's Shifted rank divisor $D_M$, which is used to compute $\text{shRVol}(M)$, only differ by a term, see Proposition 5.4.8 in \cite{mannino2022chern}. Moreover, for a simple matroid $M$ of rank 3, we also prove that $\text{shRVol}(M)$ and $c_1^2(M)$ only differ by a function of $n$, see Lemma 6.4.2. in \cite{mannino2022chern}. 
\end{remark}      

We continue with our results on the quotient of the Chern numbers. The next result, generalizes Proposition 3.4 in \cite{eterovic2022geography} which gives bounds for the quotient of Chern numbers of line arrangements. Moreover, these bounds are the matroid analogue to the bound for complex projective surfaces.
\boundsc*
\begin{proof}
Proving the left inequality is equivalent to showing the inequality below
\begin{align}
    0&\leq (n-2)\overline{c}_1^2(M)-(2n-6)\overline{c}_2(M). \label{eq: inequality}
\end{align}
Recall from the proof of \cref{prop: Chern3} that the equality
\begin{align*}
    -n^2+n = \sum_{m\geq 2}(m-m^2)t_m
\end{align*}
holds. Then, by inserting for $\overline{c}_1^2(M)$, and $\overline{c}_2(M)$ in the right hand side of Inequality \ref{eq: inequality}
we get the following equality 
\begin{align*}
    (n-2)\overline{c}_1^2(M)-(2n-6)\overline{c}_2(M)&=-n^2+n+\sum_{m\geq 2}(mn-2n+2)t_m\\ &=\sum_{m\geq2}(-m^2+m)t_m + \sum_{m\geq 2}(mn-2n+2)t_m\\
    &=\sum_{m\geq2}(-m^2+m(1+n)+(2-2n))t_m.
\end{align*}
Moreover, the inequality $-m^2+m(1+n)+(2-2n)\geq 0$ holds for the values $2\leq m\leq n-1$ and the inequality $-m^2+m(1+n)+(2-2n)) > 0$ holds for all $3\leq m\leq n-2$. And since $m$ is less then $n-1$ by our assumption, Inequality \ref{eq: inequality} holds. The left equality in the theorem holds if and only if $M$ has $n(n-1)/2$ flats of rank 2 of size 2, hence if and only if $M$ is the uniform matroid $U_{3,n}$. 

Proving the right inequality is equivalent to showing the following inequality 
$$\overline{c}_1^2(M)-3\overline{c}_2(M)=n-\sum_{m\geq 2}t_m\leq 0.$$ The above inequality holds by Theorem 1 in \cite{dowling1975whitney} which states that for a finite geometric lattice of rank 3 the following inequality $W_1\leq W_2$ holds, where $W_1$ is the number of lattice elements of rank 1 and $W_2$ is the number of lattice elements of rank 2. 
Moreover, right equality holds if and only if the matroid arises from a finite projective plane $PG(2,q)$, see Theorem 1 in \cite{de1948p}.
\end{proof} 

The next theorem about the quotient of the Chern numbers is a generalization of Theorem 3.5 in \cite{eterovic2022geography} which is stated in terms of Chern numbers of real line arrangements.
\orientable*
\begin{proof} 
Note first that 
\begin{align*}
    5 \overline{c}_2(M)-2 \overline{c}_1^2(M) = -3 - \sum_{m\geq 2}(m-3)t_m.
\end{align*}
A pseudoline arrangement partitions $\mathbb{P}^2_\mathbb{R}$ into polygons and let $p_m$ be the number of $m$-gons. As in the proof of Theorem 3.5 in \cite{eterovic2022geography}, we refer to p.115 in \cite{hirzebruch1983arrangements} for the following statement. By the Euler characteristic formula, and by using that the Euler characteristic of $\mathbb{P}^2_\mathbb{R}$ is 1 one attains the following equality
 $$\sum_{m\geq 2}(m-3)p_m=-3-\sum_{m\geq2}(m-3)t_m.$$
 Since every two pseudolines intersect in exactly one point, and all the pseudolines do not intersect in the same point, the multiplicity $p_2=0$. Hence, the sum $\sum_{m\geq 2}(m-3)p_m\geq 0$ and the original inequality holds. Moreover, if the pseudoline arrangement $\mathcal{A}$ is simplicial, the multiplicity $p_k= 0$ for all $k\neq 3$. Hence, equality is achieved if and only if the matroid $M$ arises from a simplicial pseudoline arrangement. 
 
 In fact, the inequality $-3 - \sum_{m\geq 2}(m-3)t_m\geq 0$ for pseudoline arrangements was first proven by Melchior in 1940 and published in the unfavorable journal Deutsche Matematik \cite{AMS}, which we choose not to cite here.  
\end{proof}

In the table of \cref{fig: ch rank 3}, we have listed the values of the Chern numbers $\overline{c}_1^2(M)$, $\overline{c}_2(M)$ for some specific matroids and their corresponding quotient. Note that we have also included the non-Fano matroid, which arises from relaxing one of the flats of rank two of the Fano matroid, and the matroid arising from the Braid arrangement. In the graph of \cref{fig: ch rank 3}, we have plotted the Chern number pairs $(\overline{c}_2(M),\overline{c}_1^2(M))$. 
The slope of the lines represents the bound of the quotient of the Chern numbers given in \cref{thm: bounds ch 3} and \cref{thm: orientable}, which are respectively 3 and $\frac{5}{2}$.

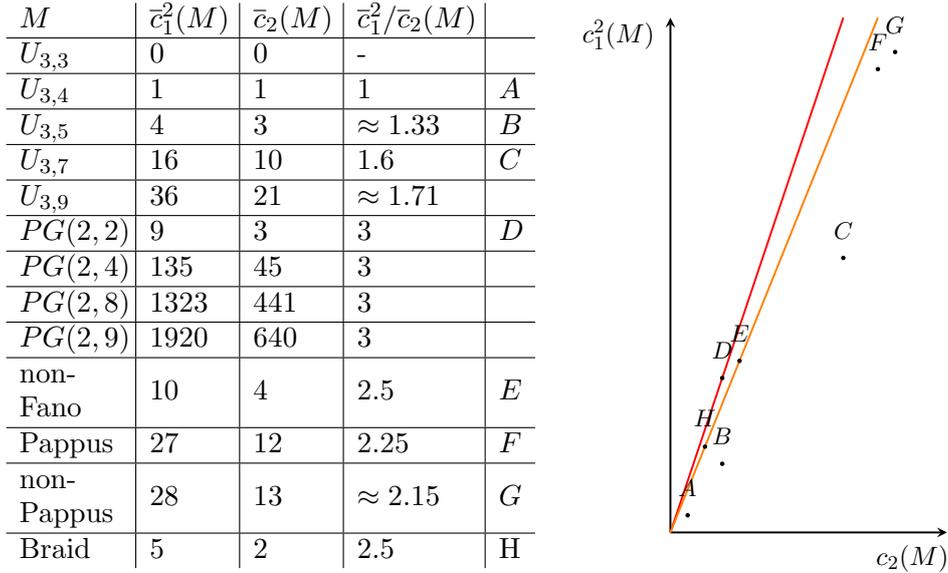
\begin{figure}[h!]
  \centering
  \begin{minipage}[b]{0.4\textwidth}
    \begin{tabular}{  m{3.5em} | m{1cm}| m{1cm}  | m{1.5cm} | m{0.3cm}} 
    $M$ & $\overline{c}_1^2(M)$ & $\overline{c}_2(M)$ & $\overline{c}_1^2/\overline{c}_2(M)$  &  \\
  \hline
  $U_{3,3}$ & 0 & 0 & - & \\ 
  \hline
  $U_{3,4}$ & 1 & 1 & 1 & $A$\\
  \hline
  $U_{3,5}$ & 4 & 3 & $\approx 1.33$ & $B$ \\
  \hline
  $U_{3,7}$ & 16 & 10 & 1.6 & $C$\\
  \hline
  $U_{3,9}$ & 36 & 21 & $\approx 1.71$ & \\
   \hline
  $PG(2,2)$ & 9 & 3 & 3 & $D$\\ 
  \hline
  $PG(2,4)$ & 135 & 45 & 3 & \\
  \hline
  $PG(2,8)$ & 1323 & 441 & 3 & \\
  \hline
  $PG(2,9)$ & 1920 & 640 & 3 & \\
  \hline
  non-Fano & 10 & 4 & 2.5 & $E$\\
  \hline
  Pappus & 27 & 12 & 2.25 & $F$\\
  \hline
  non-Pappus & 28 & 13 & $\approx 2.15$ & $G$\\
  \hline
  Braid & 5 & 2 & 2.5 & H\\
\end{tabular}
  \end{minipage}
  \hfill
  \raisebox{-23ex}{
  \begin{minipage}[b]{0.4\textwidth}
    \scalebox{0.91}
    {\begin{tikzpicture}[scale = 0.25]
\draw[line width=0.3mm, black, thick, -stealth] (0,0) -> (16,0);
\draw[line width=0.3mm, black, thick, -stealth,] (0,0) -> (0,30);
\draw[line width=0.3mm, red, thick, ] (0,0) -> (10,30);
\draw[line width=0.3mm, orange, thick, ] (0,0) -> (12,30);
\node[scale = 1] (1) at (-3,29) {$c_1^2(M)$};
\node[scale = 1] (2) at (14,-1.5) {$c_2(M)$};

\begin{scope}[every node/.style={circle,thick,draw,fill}, fill = black]
    \node[scale= 0.1, label ={\small $A$}] (A) at (1,1) {}; 
    \node[scale= 0.1, label ={\small $B$}] (B) at (3,4) {}; 
    \node[scale= 0.1, label ={\small $C$}] (C) at (10,16) {}; 

    \node[scale= 0.1, label ={\small $D$}] (D) at (3,9) {}; 
    \node[scale= 0.1, label ={\small $E$}] (E) at (4,10) {}; 
    \node[scale= 0.1, label ={\small $F$}] (F) at (12,27) {}; 
    \node[scale= 0.1, label ={\small $G$}] (G) at (13,28) {}; 
    \node[scale= 0.1, label ={\small $H$}] (H) at (2,5) {}; 
\end{scope}
\end{tikzpicture}}
  \end{minipage}
  }
  \caption{Chern numbers of matroids of rank 3.}
  \label{fig: ch rank 3}
\end{figure}



\bibliographystyle{alpha}
\bibliography{biblio}
\textit{Eline Mannino}\\
\textit{Department of Mathematics} \\
\textit{University of Oslo}\\
\textit{0316 Oslo}\\
\textit{Norway} \\
elinmann@uio.no

\end{document}